\theoremstyle{plain}
\newtheorem {theorem}{Theorem}[section]
\theoremstyle{remark}
\newtheorem {definition}[theorem]{Definition}
\newtheorem {example}{Example}
\newtheorem {notation}[theorem]{Notation}
 \newtheorem{assertion}{Assertion}[theorem]
\begin{document}

\makeatother
\author{Ron Aharoni}
\address{Department of Mathematics\\ Technion, Haifa, Israel}
\thanks{The research of the first author was  supported by  an ISF grant, BSF grant no. 2006099 and by the Discount Bank Chair at the Technion.}
\email[Ron Aharoni]{raharoni@gmail.com}

\author{Irina Gorelik}
\address{Department of Mathematics\\ Technion, Haifa, Israel}
\thanks{The research of the first author was  supported by  an ISF grant.}
\email[Irina Gorelik]{irenag89@gmail.com}

\vspace*{3cm}
\thispagestyle{empty}

\begin{abstract}
The {\em independent domination number} $\gamma^i(G)$ of a graph $G$
is the maximum, over all independent sets $I$, of the minimal number
of vertices needed to dominate $I$. It is known \cite{abz} that in
chordal graphs $\gamma^i$ is equal to $\gamma$, the ordinary
domination number. The weighted version of this result is not true,
but we show that it does hold for interval graphs, and for the intersection (that is, line) graphs of subtrees of a given tree, where each subtree is a single edge.

\end{abstract}

\title{Weighted domination of independent sets}
\maketitle

\section{ Introduction}
 The (open) neighborhood of a vertex $v$ in a graph $G$, denoted  by $\tilde{N}(v)=\tilde{N}_G(v)$,   is the set of all vertices connected to $v$. Given a set $D$ of vertices we write  $\tilde{N}(D)$ for $\bigcup_{v \in D}\tilde{N}(v)$. Let  $N(D)=N_G(D) = \tilde{N}(D) \cup D$.
 A set $D$ of vertices in a graph $G$ is said to {\em dominate} a set $S$ of vertices if $S\subseteq N(D)$. A set dominating $V$ is simply called
   {\em dominating}.
 The minimal size of a
dominating set is denoted by $\gamma(G)$.  A set $I$ of vertices is called {\em independent} if it does not contain any edge of $G$. The maximum, over all
independent sets $I$, of the minimal size of a set dominating $I$,
is denoted by $\gamma^i(G)$. One reason for the interest in $\gamma^i$  is that it is a lower bound  on the topological connectivity of the
independence complex of a graph (see \cite{ah}).

\begin{notation}
Given a real valued function $f$ on a set $S$,  and a set $A\subseteq
S$,  let $f[A]=\sum_{a\in A}f(a)$. We write $|f|=f[S]$ and
call $|f|$  the {\em size} of $f$.
\end{notation}
Domination parameters have weighted versions.
\begin{definition}\label{def:dom}
Let $G=(V,E)$  be a graph, and let $w:V\to\mathbb{N}$  be a weight
function on $V$. A function $f:V\to\mathbb{R} $ {\em
$w$-dominates} a set $U$ of vertices if $f[N(u)]\geq w(u)$ for every $u\in U$.
We say that $f$ is {\em $w$-dominating} if it $w$-dominates $V$.

\end{definition}

\begin{definition}
 The {\em weighted domination number}
$\gamma_w(G)$  is the  minimal size of an integral  $w$-dominating
function. The fractional weighted domination number
$\gamma^*_w(G)$  is the  minimal size of a real valued  $w$-dominating
function.

The {\em independent domination number} $\gamma^i_w(G)$ is
the maximum over all independent sets $I$ of the minimal  size of an integral  function
 $w$-dominating $I$.

\end{definition}

These definitions coincide with the ordinary ones for $w\equiv 1$.

A graph is called {\em chordal} if it contains no induced cycle of length larger than $3$. A well known
characterization  of chordal graphs was proved in \cite{gavril}.

\begin{theorem}\label{gavril}
A graph is chordal if and only if it
is the line
graph of a family of subtrees of a tree.
\end{theorem}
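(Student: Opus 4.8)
The plan is to prove the two implications separately, with the \emph{Helly property of subtrees} as the common engine: any family of pairwise-intersecting subtrees of a tree $T$ has a common vertex. I would establish this first, by rooting $T$ at an arbitrary vertex, letting $x_i$ be the vertex of the $i$-th subtree closest to the root, and verifying that the $x_i$ lying deepest in $T$ belongs to every subtree of the family (any two ``tops'' $x_i,x_j$ are both ancestors of a common vertex of their subtrees, hence comparable, and the deeper one is caught inside the other subtree).

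For the direction ``intersection graph $\Rightarrow$ chordal'', I would suppose $G$ is the intersection (line) graph of subtrees $\{T_v\}_{v\in V}$ of a tree $T$ and argue by contradiction from a chordless cycle $v_1v_2\cdots v_k$ with $k\ge 4$. Choosing $a_i\in T_{v_i}\cap T_{v_{i+1}}$ for each $i$ (cyclically) and joining $a_{i-1}$ to $a_i$ by a path inside $T_{v_i}$ yields a closed walk in $T$. Since $T$ is acyclic this walk cannot be a simple cycle, so two segments lying in \emph{non-consecutive} subtrees $T_{v_i},T_{v_j}$ must share a vertex; that vertex witnesses an intersection $T_{v_i}\cap T_{v_j}\ne\emptyset$ and hence a chord $v_iv_j$, a contradiction.

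For the converse, ``chordal $\Rightarrow$ intersection graph'', I would induct on $|V(G)|$. Every chordal graph has a simplicial vertex $v$ (one whose neighborhood $\tilde{N}(v)$ induces a clique); deleting it leaves a chordal graph $G'$, which by induction is represented by subtrees $\{T'_u\}$ of a tree $T'$. As $\tilde{N}(v)$ is a clique, the subtrees $\{T'_u : u\in\tilde{N}(v)\}$ pairwise intersect, so by the Helly property they share a vertex $x\in T'$. I would then attach a new leaf $y$ to $T'$ at $x$, put $T_v=\{y\}$, replace each $T'_u$ with $u\in\tilde{N}(v)$ by $T'_u\cup\{y\}$ (still connected, as it contains $x$), and leave the remaining subtrees untouched.

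The delicate point --- and the step I expect to carry the real content --- is checking that this surgery produces \emph{exactly} the adjacencies of $v$ and no spurious ones. Here $T_v$ meets $T_u$ iff $y\in T_u$, i.e.\ iff $u\in\tilde{N}(v)$, which is correct; and for $u,u'\ne v$ the only vertex that could newly lie in both is $y$, but any two members of $\tilde{N}(v)$ were already adjacent in $G'$, so no new edge appears. Once the Helly property is in hand, both the cycle argument and this bookkeeping are short; the genuine work is concentrated in the Helly lemma and in verifying that the inductive extension preserves the intersection pattern.
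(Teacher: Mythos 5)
The paper does not prove this statement at all: it is quoted as a known theorem of Gavril, with a citation to \cite{gavril}, so there is no in-paper proof to compare yours against. Judged on its own, your proposal is the standard argument, and most of it is sound: the Helly lemma for subtrees is proved correctly (the deepest ``top'' $x_i$ lies on the path inside every other subtree from its top to a common point, hence belongs to all of them), and the inductive construction for ``chordal $\Rightarrow$ intersection graph'' is correct, granting the existence of a simplicial vertex in every chordal graph (Dirac's lemma), which you assert but do not prove and which carries real content of its own.

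There is, however, a genuine gap in the other direction. From ``the closed walk cannot be a simple cycle'' you conclude that two segments lying in \emph{non-consecutive} subtrees must share a vertex; that inference does not follow. The walk can fail to be simple merely because consecutive segments $P_i\subseteq T_{v_i}$ and $P_{i+1}\subseteq T_{v_{i+1}}$ overlap in more than the chosen point $a_i$ (or because the walk doubles back inside the union of consecutive segments), and such repetitions produce no chord. To close this you need an actual separation argument: since $v_1v_3$ is not a chord, $T_{v_1}\cap T_{v_3}=\emptyset$, so some edge $e$ of $T$ separates $T_{v_1}$ from $T_{v_3}$ in $T-e$; then $T_{v_2}$, containing $a_1\in T_{v_1}$ and $a_2\in T_{v_3}$, must contain both endpoints of $e$, and walking along the other arc $T_{v_3},T_{v_4},\dots,T_{v_k},T_{v_1}$ of pairwise-intersecting subtrees from one side of $T-e$ to the other forces some $T_{v_j}$ with $j\notin\{1,2,3\}$ to contain $e$ as well, giving the chord $v_2v_j$. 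With that replacement (and a proof or citation for Dirac's lemma) the argument is complete.
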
In
In \cite{abz} the following was proved:

\begin{theorem} If $G$ is chordal then  $\gamma^i(G) =\gamma(G)$.
\end{theorem}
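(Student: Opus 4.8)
The inequality $\gamma^i(G)\le\gamma(G)$ holds for every graph, since a set dominating all of $V$ dominates each independent set; so only $\gamma^i(G)\ge\gamma(G)$ needs proof, and the plan is to recast both quantities as transversal numbers. Because $N$ is symmetric, a set $D$ dominates a vertex $u$ exactly when $D\cap N(u)\neq\emptyset$, hence $D$ dominates a set $U$ iff $D$ meets every member of the family $\mathcal N(U):=\{N(u):u\in U\}$. Writing $\tau(\mathcal F)$ for the least number of vertices meeting all sets of a family $\mathcal F$, this gives $\gamma(G)=\tau(\mathcal N(V))$ and $\gamma^i(G)=\max_{I\text{ independent}}\tau(\mathcal N(I))$. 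The theorem is thus equivalent to the existence of an \emph{independent} set $I$ whose closed neighbourhoods are as hard to pierce as all of them: $\tau(\mathcal N(I))\ge\gamma(G)$.

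It is worth recording why the obvious witness is insufficient, as this locates the role of chordality. If $I$ is a set of vertices with pairwise disjoint closed neighbourhoods — a $2$-packing, i.e. a set of pairwise distance $\ge 3$ — then $\tau(\mathcal N(I))=|I|$, so the maximum packing size $\rho(G)$ gives $\gamma^i(G)\ge\rho(G)$. For trees this already suffices, since by the classical theorem of Meir and Moon $\rho=\gamma$ there. For general chordal graphs, however, $\rho$ can be strictly smaller than $\gamma$ (in the $3$-sun $\rho=1$ while $\gamma=2$), so the witness cannot be taken to be a $2$-packing, and some genuinely chordal structure beyond the tree case must be exploited.

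The natural approach is to build $I$ by induction along a clique tree of $G$, which exists by Theorem~\ref{gavril}. Root the tree, take a deepest leaf clique $L$ with separator $S=L\cap\mathrm{parent}(L)$, and consider a vertex $v\in L\setminus S$; then $N(v)=L$, every neighbour of $v$ lies in $L$, and any vertex dominating $v$ must lie in the clique $L$. I would like to add such a private $v$ to $I$, pass to the smaller chordal graph obtained by deleting the handled part of $L$, and argue that dominating $v$ forces a hitting vertex of $L$ that is spent on nothing else, so that each leaf processed in this way raises $\tau(\mathcal N(I))$ by one and the recursion terminates at $\gamma(G)$.

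The difficulty — and it is the whole difficulty — is \emph{sharing} through separators, exactly the effect behind $\rho<\gamma$ above. A separator vertex $s\in S$ can pierce $N(v)$ and simultaneously the neighbourhood of a far-away member of $I$, so adding a private vertex at every leaf is simply wrong: already on the path $1-2-3$ the two endpoints share the neighbour $2$, and $\{1,3\}$ is dominated by a single vertex, so one must not place both of them in $I$. The crux is therefore to decide, consistently across the whole clique tree, which private vertices to include, so that no separator vertex is ever forced to do double duty; equivalently, one needs an exchange lemma showing that an optimal transversal of $\mathcal N(I)$ can always be normalized to serve each chosen leaf vertex by a dedicated vertex, the consolidation of the separator vertices' remaining obligations being carried out through the Helly property of the subtree representation. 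Establishing that such a consistent choice always reaches $\gamma(G)$ is the step I expect to contain essentially all the work; the accounting around it is routine.
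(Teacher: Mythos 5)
Your proposal is a plan, not a proof. The reduction of $\gamma$ and $\gamma^i$ to transversal numbers of neighbourhood families is correct but is only a reformulation, and the observation that $2$-packings do not suffice (the $3$-sun, where $\rho=1<2=\gamma=\gamma^i$) is a correct and useful negative remark; but the positive content of the theorem --- exhibiting an independent set $I$ with $\tau(\mathcal{N}(I))\ge\gamma(G)$ --- is exactly the step you defer. You say you ``would like to'' add a private vertex $v$ from each processed leaf clique and ``argue that'' each such step raises the transversal number by one, and then you acknowledge that deciding which private vertices to include, and proving the needed exchange lemma, is ``the step I expect to contain essentially all the work.'' That step \emph{is} the theorem. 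Nothing in the proposal establishes it: the exchange lemma is never stated precisely, ``the handled part of $L$'' is never specified, and there is no argument that the recursion performs at least $\gamma(G)$ productive steps. This last point is where a naive version fails: if you delete all of $L$ (or all of $N(v)$) and recurse, the domination number of the remainder can drop by more than one, and a dominating set of the remainder can reuse separator vertices to dominate $v$ as well, so the bound $\tau(\mathcal{N}(I))\ge(\text{number of steps})$ does not follow from ``each leaf contributes a dedicated vertex'' without the very normalization argument you leave unproved.

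For what it is worth, the present paper does not prove this theorem either --- it quotes it from \cite{abz} --- and the proof there does proceed by induction on a subtree representation, so your plan points in a workable direction; but as submitted it has a genuine gap that coincides with the entire inductive core of the argument.
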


This theorem does not extend to the weighted case, namely there are chordal graphs in which $\gamma_w^i<\gamma_w$.

\begin{example}\label{example}  Let T=(V,E) be
a three rays star, with rays of length 3, forked at their ends.  That is,  $V(T)=\{v\} \cup \bigcup_{i \le 3,j\le 4}\{a^j_i\}$ and \\ $E=\{(v,a^1_i) \mid i \le 3\} \cup \{(a^j_i,a^{j+1}_i) \mid i \le 3, ~j<3\}\cup\{(a^2_i,a^4_i)\mid i\le 3\}$.
Let $G$ be the intersection graph of four subtrees of $T$, that are given below with their weight function $w$:\\
$w(\{a^3_i,a^2_i,a^4_i\})=1$ for $i\le 3$,\\
 $w(\{a^1_i,a^2_i,a^j_i\})=2$ for $i\le 3,\;j=3,4$,\\
 $w(\{v,a^1_i,a^2_i\})=3 $  for $i\le 3$ and\\
$w(\{a^1_i,v,a^1_j\})=4$ for $1\le i<j\le 3$.

Here $\gamma_w=5$, while $\gamma^i_w=4$.
\end{example}
In this paper we show that $\gamma^i_w=\gamma_w$ in three subclasses of the class of chordal graphs:

(i) interval graphs,

(ii) the line graphs of a family of subtrees of a given tree, each consisting of a single edge, , and

(iii) split graphs.

\section{dispersed sets}

\begin{definition}
A set of vertices in a graph $G$ is said to be {\em dispersed} if every
two elements in it are at distance of  $3$ or more apart. Given an
integral weight function $w$ on $V(G)$, the maximal total weight of
a dispersed set is denoted by $\rho_w(G)$.
\end{definition}

 The fractional relaxation $\rho^*_w(G)$
 is  the solution of the following linear program:
 \\
$$ (P)~~~~~ \max \sum_{v \in V}w(v)g(v),~~  g:V \to \mathbb{R}^+ ~~\text{satisfying} ~~  g[N(v)] \le 1~~ \text{for~~ all}~~ V\in V. $$
 \\

 The dual (D) of this linear program is the program yielding
$\gamma_w^*$. Hence, by LP Duality,
$\gamma_w^*=\rho_w^*$. Clearly,    $$\rho_w \le \gamma^i_w \le \gamma_w$$
and hence if  $\rho_w=\gamma_w$ then our desired equality $\gamma^i_w = \gamma_w$ is valid. While sufficient for the validity of $\gamma^i_w = \gamma_w$, it is not a necessary condition, as the following example shows.

\begin{example}\label{ex-split}
let $G=(V,E)$ where $V=A\cup B$ such that $A=\{a_1,a_2,a_3\}$ is a clique and $\{b_1,b_2,b_3\}$ is independent and $\{b_i,a_i\},\{b_i,a_i+1\}\in E$ for $i=1,2,3$ where the calculation is modulo $3$. Let $w$ be the following weight function as follows: $w(a_i)=5$ and $w(b_i)=4$ for $i=1,2,3$. It is easy to show that $\gamma^i_w = \gamma_w$ (for example, by using that fact that $G$ is split, see the last section of the paper), but since in $G$ every dispersed set has only one vertex, $\rho_w=5$, while $\gamma_w^i=\gamma_w=6$.
\end{example}

In the next two sections we  prove the equality  $\gamma_w=\rho_w$ in two families of graphs.


\section{Interval graphs}


A graph is an {\em interval graph} if it is
the line graph of an interval hypergraph, namely its vertices are  intervals, and two vertices
are connected if the  intervals intersect. Since we
only deal with finite hypergraphs, we can assume that the underlying set
is the discrete line, rather than the real line.

\begin{theorem}
In interval graphs  $\gamma_w=\rho_w$ for   any  weights system $w$.
\end{theorem}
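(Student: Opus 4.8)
The plan is to collapse the chain of inequalities
\[
\rho_w \le \rho_w^* = \gamma_w^* \le \gamma_w,
\]
whose middle equality is the LP duality recorded above, by producing an \emph{integral} dispersed set $S$ together with an \emph{integral} $w$-dominating function $f$ satisfying $w[S]=|f|$. Any such pair forces $\gamma_w \le |f| = w[S] \le \rho_w \le \gamma_w$, so all four quantities coincide. Equivalently, the task is to show that the packing program $(P)$ and the covering program $(D)$ attached to the closed-neighbourhood matrix of an interval graph admit integral optima witnessing each other through complementary slackness.

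To exploit the geometry I would represent each vertex as an interval on the discrete line and order the vertices by right endpoint. The key structural fact is a \emph{leftmost-point observation}: if $u$ is a vertex whose interval $I_u=[l_u,r_u]$ has the smallest right endpoint, then every interval meeting $I_u$ contains the single point $p=r_u$. Indeed, an interval $[l',r']$ meeting $I_u$ satisfies $l'\le r_u$ and $r'\ge l_u$, and since $r_u$ is minimal we also get $r'\ge r_u$, so $l'\le r_u\le r'$. Hence $N(u)$ is exactly the set of intervals through $p$, and in particular it is a clique. This localizes the domination constraint at $u$ to a single point of the line, which is what lets a left-to-right sweep keep the books.

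Building on this, I would argue by induction, peeling from the left. Let $u$ be a still-undominated vertex of minimum right endpoint and set $p=r_u$; among the intervals through $p$ choose the one, $x$, reaching furthest to the right. I would then load the entire residual demand of $u$ onto $x$, i.e. raise $f(x)$ until $f[N(u)]=w(u)$, place $u$ into $S$, decrease the demand of every vertex of $N(x)$ accordingly, and recurse on the remaining instance. Choosing $x$ as the furthest-reaching interval through $p$ is the greedy heart of the argument: it discharges as much territory to the right of $p$ as possible, so the next witness is forced far away, and one then hopes to read off that the witnesses accumulated in $S$ are pairwise at distance at least $3$ while the total weight placed equals $w[S]$.

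The step I expect to be the main obstacle is precisely this verification in the presence of weights: showing that successive witnesses have disjoint closed neighbourhoods (hence lie at distance at least $3$), and that loading all of $u$'s demand onto the far-reaching $x$ never causes weight to be counted twice, so that $|f|$ matches $w[S]$ exactly. The danger is that a later witness $u'$ could still meet the earlier $x$, or sit in $N(x)$ with its demand already partly paid, breaking both dispersedness and the accounting; controlling it requires reapplying the leftmost-point observation at every stage to guarantee that, once everything meeting $x$ is discharged, the next undominated vertex begins strictly to the right of $x$ with untouched demand. A clean alternative that avoids the hand bookkeeping is to prove integrality structurally: interval graphs are strongly chordal, so their closed-neighbourhood matrix is totally balanced, and the packing and covering polyhedra of a totally balanced $0,1$ matrix are integral for all integral weight vectors. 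This yields $\rho_w=\rho_w^*$ and $\gamma_w=\gamma_w^*$ simultaneously and closes the chain.
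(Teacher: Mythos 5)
Your overall plan (exhibit an integral $w$-dominating $f$ and a dispersed set $S$ with $|f|=w[S]$) and your greedy left-to-right construction of $f$ (pivot on the undominated interval $u$ of smallest right endpoint, load its residual demand onto the furthest-reaching neighbour $x$) coincide with the paper's. But the step you yourself flag as ``the main obstacle'' is a genuine gap, and the guarantee you propose to establish there is simply false. Take $v_1=[0,1]$ with $w(v_1)=1$, $v_2=[1,2]$ with $w(v_2)=10$, and $x=[1,3]$. The first pivot is $v_1$ and sets $f(x)=1$; the next undominated interval is $v_2$, which meets both $v_1$ and $x$ and whose demand has already been partly paid. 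So the pivots are not pairwise at distance at least $3$, and $|f|$ accumulates \emph{residual} rather than original weights, so it need not equal the weight of the pivot set ($|f|=10$ versus $w(v_1)+w(v_2)=11$ here). The paper closes this gap with machinery absent from your sketch: it runs the mirror-image sweep from the right to obtain a second dominating function $g$, proves that $f$ minimizes every prefix sum $\sum_{i\le k}f(v_i)$ (and $g$ every suffix sum) over all $w$-dominating functions, and then extracts the dispersed set not from the pivots of $f$ but from tight witnesses of $g$, partitioning the intervals into consecutive blocks $A_j$ with $f[A_j]=g[A_j]=w(v_{A_j})$ and $N(v_{A_j})\subseteq A_j$, which is what yields both the dispersedness and the weight accounting. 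None of this is recoverable from what you wrote.

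Your structural fallback is also unsound as stated. The covering polyhedron $\{f\ge 0: Af\ge w\}$ of a totally balanced matrix is \emph{not} integral for all integral right-hand sides $w$: for the paper's own example of three disjoint intervals contained in a fourth, the right-hand side $w=(1,1,1,2)$ produces the fractional vertex $(1/2,1/2,1/2,1/2)$. What is true, and would indeed close the chain, is total dual integrality of the packing system $Ag\le 1$, $g\ge 0$, for balanced matrices (Fulkerson--Hoffman--Oppenheim), equivalently Farber's theorem that weighted domination equals the weighted $2$-packing number in strongly chordal graphs; but that is a different and considerably deeper statement than the polyhedral integrality you invoke, and citing it would replace the paper's elementary argument rather than reconstruct it.
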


It clearly suffices to prove the theorem for integral $w$.
A particularly simple case is  $w\equiv 1$. In this case,
we can assume that no interval in  the hypergraph is contained in another, since removing an interval containing another does not change $\rho_w$ and $\gamma_w$ (which in this case are plainly $\rho$ and $\gamma$). When there is no containment between intervals, the order on the left endpoints of the intervals agrees with the order on the right endpoints, so we can order the intervals linearly. As noted in \cite{roberts}, listing the intervals in this order, the $(0,1)$-matrix of the linear program (P) then has the consecutive $1$s property,  and hence is totally unimodular. Thus the solutions of (P) and its dual (D) (which, as recalled, is the program expressing $\gamma^*_w$) are integral, proving $\rho_w=\gamma_w$ .

In the weighted case
this strategy does not work,
since it may be profitable in (P) to take an interval containing another, if the containing interval has larger weight.
In this case the $(0,1)$-matrix of the linear program (P) is not necessarily totally unimodular. For example, let the hypergraph consist of four intervals, three of them disjoint and the fourth contains all these three. The determinant of the matrix of (P) is then $2$.

Not having the total unimodularity tool at hand, we prove the equality $\rho_w=\gamma_w$ directly.
Since the inequality $\rho_w\leq\gamma_w$ is always true, it suffices to show that $\gamma_w\leq \rho_w$.

\begin{proof}
Let $H$ be a hypergraph of intervals, and let $G=(V,E)=L(H)$. Let $w:V\to\mathbb{N}$ be a weight function.

Enumerate the vertices of $G$, namely the intervals in $H$, as  $v_1=[x_1,y_1],v_2=[x_2,y_2],\dots ,v_n=[x_n,y_n]$, where  $y_1\leq y_2\leq\dots\leq  y_n$.
We use this enumeration to define  a
$w$-dominating function $f:V\to\mathbb{N}$.
Write $w_0=w$.
Let $v_{j_1}=v_1$ and let $u_1$ be the interval extending furthest to the right among all intervals  meeting $v_{j_1}$. Define $f_1=w_0(v_{j_1})\chi_{u_1}$, where, for a vertex $v$,  $\chi_v$ is the characteristic vector  of the set $\{v\}$. Let $w_1(v)=[ w_0(v)-f_1(u_1)]^+$ for all $v \in N(u_1)$, $w_1(v)=w_0(v)$ for all other vertices $v$. If $w_1\equiv 0$ then let $f=f_1$ - it  is then  $w$-dominating. Otherwise, let $v_{j_2}$ be the interval with positive $w_1$-value, having minimal right endpoint. Then $f_1$ $w$-dominates all the intervals $v_i$ for $i<j_2$. Let $u_2$ be an interval meeting $v_{j_2}$ and extending furthest to the right. Let $f_2=w_1(v_{j_2})\chi_{u_2}$, and define $w_2(v)=[ w_1(v)-f_2(u_2)]^+$ for all $v \in N(u_2)$, $w_2(v)=w_1(v)$ for all other vertices $v$.

Assume  the intervals $v_{j_1},\dots,v_{j_k}$, the functions $f_1,\dots,f_k$  and the weight function $w_{k}$ have been defined.  If $w_k\equiv 0$ then we  end the definition procedure, and let $f=\sum_{i=1}^k f_i$. Clearly, $f$ is $w$-dominating. Otherwise  let $v_{j_{k+1}}$ be the interval with positive $w_k$-value, having minimal right endpoint. In this case $\sum_{i=1}^k f_k$ $w$-dominates all the intervals $v_i$ for $i<j_{k+1}$. Let $u_{k+1}$ be an interval meeting $v_{j_{k+1}}$ and extending furthest to the right. Let $f_{k+1}=w_k(v_{j_{k+1}})\chi_{u_k}$ and define $w_{k+1}(v)=[w_{k}(v)-f_{k+1}(v)]^+$ for every $v\in N(u_k)$ and                                      $w_{k+1}(v)=w_{k}(v)$ otherwise.

At some stage  $t$ we must have  $w_t\equiv 0$. Let then  $f=\sum_{i=1}^t f_i $. Clearly, $f$ is $w$- dominating.

If  $f(u)\neq 0$ for some interval $u$,  then there exists a vertex $v$ such that $u$ is the interval extending furthest to the right and $\sum_{x\in N(v)}f(x)=w_i(v)$ for some $i$. This implies that  $u$ is maximal with respect to containment. Thus $f(v)=0$ for all  non-maximal intervals.
\begin{assertion}\label{min}
Any $w$-dominating function $h$ satisfies $\sum_{i=1}^k f(v_i)\leq \sum_{i=1}^k h(v_i)$ for all $k=1,2,\dots,n$.
\end{assertion}
\begin{proof}
By induction on $k$. For $k=1$, if $f(v_1)=0$ then clearly $f(v_1)\leq h(v_1)$. Otherwise, $f(v_1)>0$ and the definition of $f$ implies that $v_1$ is an isolated interval, namely $N(v_1)=\{v_1\}$. Since $f,h$ are $w$-dominating this implies  $f(v_1)=h(v_1)=w(v_1)$.

Assume that $\sum_{i=1}^j f(v_i)\leq \sum_{i=1}^j h(v_i)$ for every $j<k$.
If $f(v_k)=0\leq h(v_k)$ then  by induction hypothesis we have $$\sum_{i=1}^k f(v_i)=\sum_{i=1}^{k-1}f(v_i)+f(v_k)\leq \sum_{i=1}^{k-1}h(v_i)+h(v_k)=\sum_{i=1}^k h(v_i)$$ as desired.

So we may assume $f(v_k)>0$. Let $t$ be the maximal index for which $f_t(v_k)>0$. By the definition of $f_t$ there exists a vertex $v_t$ such that $v_k$ is the vertex extending furthest to the right and $f(N[v_t])=w(v_t)$. Let $j+1$ be the minimal index of the intervals in $N(v_t)$ and let $A=\{v_{j+1},\dots ,v_k\}$.
We claim that:

\begin{equation}\label{subassertion}
f[A]=f[N(v_t)].
\end{equation}
To show this, it clearly suffices to prove that $f(v)=0$  for every $v\in A\setminus N(v_t)$.
Indeed, if $v=[x,y]\in A\setminus N(v_t)$ then since $v\in A$  we have $y_{j+1}\leq y\leq y_k$ and  since $v\cap v_t=\emptyset$ we have $x_t<x$. On the other hand, since $v_t\cap v_k\neq \emptyset$ we have $x_k\leq x_t$. Hence $x_k<x<y<y_t$, meaning that $v$ is contained in $v_k$, and by the definition of $f$ it follows that $f(v)=0$. This proves \eqref{subassertion}.

Let us now return to the proof of the assertion.
Since $h$ is $w$-dominating this implies $$\sum_{i=j+1}^k f(v_i)=f[A]=f[N(v_t)]=w(v_t)\leq h[N(v_t)]\leq h[A]=\sum_{i=j+1}^k h(v_i).$$
By induction hypothesis,  $\sum_{i=1}^j f(v_i)\leq \sum_{i=1}^j h(v_i)$. Hence $$\sum_{i=1}^k f(v_i)=\sum_{i=1}^j f(v_i)+\sum_{i=j+1}^k f(v_i)\leq \sum_{i=1}^j h(v_i)+\sum_{i=j+1}^k h(v_i)=\sum_{i=1}^k h(v_i)$$ as desired.

\end{proof}

By enumerating the intervals in order of  their left endpoints and applying the same algorithm from right to left, we obtain another  $w$-dominating function $g$, satisfying $\sum_{i=k}^n g(v_i)\leq \sum_{i=k}^n h(v_i)$ for any $w$-dominating function $h$.

 Note that the enumerations of the intervals differ only by the order of the non-maximal intervals,  and that, as remarked above, the functions $f$ and $g$ take $0$ value on such intervals.

\begin{assertion}
There exists a dispersed set of vertices $I$ such that  $w[I]=|f|$.
\end{assertion}

\begin{proof}

We construct $I$ by an inductive process.
This will be accompanied by  partitioning    $V(H)$ into
sets $A_1, \ldots , A_p$, where $\{1, \ldots ,p\}$ is partitioned into two sets, $J$ and $K$. The conditions we shall demand are:

\renewcommand{\theenumi }{\Roman{enumi}}
\begin{enumerate}
\item  $I=\{v_{A_j}, ~j \in J\}$, where $v_{A_j} \in A_j$ for all $j \in J$ and $f[A_j]=g[A_j]=w(v_{A_j})$.
\item  $f[A_k]=g[A_k]=0$ for every $k \in K$.
\end{enumerate}
Note that once proved, (I) and (II) will imply

$$|f|=\sum_{j\in J}f[A_j]=\sum_{j\in J}w(v_{A_j})=w[I]$$ as desired.

To start the construction we note that if $g(v_1)=0$, then the inequality $f(v_1)\leq g(v_1)$ implies $f(v_1)=0$. Let then $I_1=\emptyset,\; A_1=\{v_1\},\; J_1=\emptyset$ and $K_1=\{1\}$.

Assume next that $g(v_1)>0$.  By the inductive definition of $g$ there exists an interval  $v_{A_1}$ for which $v_1$ is the interval extending furthest to the left among all intervals meeting $v_{A_1}$,  and   $g[N(v_{A_1})]=w(v_{A_1})$. Let $v_{j_1}$ be the vertex with the rightmost  right endpoint in $N(v_{A_1})$. Let $A_1=\{v_1,v_2,\dots,v_{j_1}\}$.
\begin{assertion}
The set $A_1$  satisfies $f[A_1]=w(v_{A_1})=g[A_1]$.
\end{assertion}
\begin{proof}
 The set $A_1$ contains $N(v_{A_1})$
and possibly some other non-maximal vertices (namely, intervals) with $g(v)=f(v)=0$. Hence, $w(v_{A_1})=g[N(v_{A_1})]=g[A_1]$. By Assertion \ref{min} we have  $g[A_1]\geq  f[A_1]$ and, since $f$ is dominating we have that $f[A_1]= f[N(v_{A_1})]\geq w(v_{A_1})$, proving the equality.
\end{proof}
Let $I_1=\{v_{A_1}\},\; J_1=\{1\} $ and $K_1=\emptyset$.

Assume that we have defined the dispersed set $I_{k-1}$,  the partition  of the set $\{v_1,v_2,\dots,v_{j_{k-1}}\}$
into sets $A_1, \ldots ,A_{k-1}$ and the partition of the set $\{1,\dots,k-1\}$ into two set $J_{k-1}$ and $K_{k-1}$, satisfying
\begin{enumerate}
\item  $I_{k-1}=\{v_{A_j}, ~j \in J_{k-1}\}$, where $v_{A_j} \in A_j$ for all $j \in J_{k-1}$ and $f[A_j]=g[A_j]=w(v_{A_j})$.
\item  $f[A_k]=g[A_k]=0$ for every $k \in K_{k-1}$.
\end{enumerate}

We then have  $$\sum_{i=1}^{j_{k-1}}f(v_i)=\sum_{i=1}^k f[A_i]= \sum_{j\in J_{k-1}}f[A_j]=\sum_{j\in J}w(v_{A_j})=w[I_{k-1}]$$.

and

$$\sum_{i=1}^{j_{k-1}}g(v_i)=\sum_{i=1}^k g[A_i]= \sum_{j\in J_{k-1}}g[A_j]=\sum_{j\in J}w(v_{A_j})=w[I_{k-1}]$$.

 Hence $\sum_{i=1}^{j_{k-1}}f(v_i)=\sum_{i=1}^{j_{k-1}}g(v_i)$.

If $g(v_{j_{k-1}+1})=0$, then since $$\sum_{i=1}^{j_{k-1}+1}g(v_i)\geq\sum_{i=1}^{j_{k-1}+1}f(v_i)$$ we have that $f(v_{j_{k-1}+1})=0$.

Let then $I_k=I_{k-1},\;  A_k=\{v_{j_{k-1}+1}\},\; J_k=J_{k-1}$ and $K_k=K_{k-1}\cup\{k\}$ .

Otherwise, $g(v_{j_{k-1}+1})>0$. By the inductive definition of $g$ there exists an interval $v_{A_k}$ for which $v_{j_{k-1}+1}$ is the interval extending furthest to the left among all the intervals meeting $v_{A_k}$, and  $g[N(v_{A_k})]=w(v_{A_k})$. Let $v_{j_k}$ be the vertex with the rightmost right endpoint in  $N(v_{A_k})$. Let $A_k=\{v_{j_{k-1}+1},\dots,v_{j_k}\}$.
\begin{assertion}
The set $A_k$  satisfies $f[A_k]=w(v_{A_k})=g[A_k]$.
\end{assertion}

\begin{proof}
The set $A_k$ includes the set   $N(v_{A_k})$
and maybe some other non-maximal vertices with $g(v)=f(v)=0$. Hence, $w(v_{A_k})=g[N(v_{A_k})]=g[A_k]$. By Assertion \ref{min} we have $\sum_{i=1}^k g(v_i)\geq \sum_{i=1}^{j_k} f(v_i)$ and by induction hypothesis we have $
\sum_{i=1}^{j_{k-1}}g(v_i)=\sum_{i=1}^{j_{k-1}}f(v_i)$. Hence, $g[A_k]\geq f[A_k]$. On the other hand, since $f$ is dominating, $ f[A_k]\geq w(v_{A_k})$ proving the desired equality.
\end{proof}
Let $I_k=I_{k-1}\cup\{v_{A_k}\},\; J_k=J_{k-1}\cup\{k\}$ and $K_k=K_{k-1}$.

The algorithm ends when for some $p$, we have  $v_{j_p}=v_n$, then \\$I=I_p=\{v_{A_j}|j\in J_p\}$ is the  dispersed set and the  partition  $A_1,\dots, A_p$ of $V$ satisfy
\begin{enumerate}
\item $f[A_j]=w(v_{A_j})$ for every $j\in J_p$, and
\item $f[A_k]=0$ for every $k\in K$.
\end{enumerate}
Hence
$$|f|=\sum_{i=1}^n f(v_i)=\sum_{i=1}^p f[A_i]=\sum_{j\in J_p} f[A_j]=\sum_{j\in J_p}w(v_{A_j})=w[I]$$ as desired.

\end{proof}
Since $I$  is  dispersed  and $f$  is $w$-dominating, we have $\gamma_w\leq|f|=w[I]\leq\rho_w$.
\end{proof}

\section{Single edge subtrees of a tree.}

In this section we prove the equality $\gamma^i_w=\gamma_w$ for  chordal graphs having a
subtrees representation  (see Theorem \ref{gavril})
in which each  subtree consists of a single edge.
As in the  case of interval graphs,  the matrix defining the two dual linear programs is not necessarily totally unimodular, and the polytopes do not necessarily have integral vertices.
For example, if the tree is a $3$ rays star, with rays of length $2$, and the edges representing the graph are all edges of the tree, then the $(0,1)$-matrix of the linear program (P) has determinant  $2$.
But here, again, the stronger $\gamma_w=\rho_w$ is true.

\begin{theorem}
The line graph $G$ of a subset of the edges of a tree  satisfies  $\gamma_w(G)=\rho_w(G)$ for every integral weights system $w$.
\end{theorem}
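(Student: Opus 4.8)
The plan is to mirror the interval-graph proof, replacing the linear order on intervals by a depth order on a rooted tree. Since $\rho_w\le\gamma_w$ always holds and it clearly suffices to treat integral $w$, the goal reduces to producing an integral $w$-dominating function $f$ together with a dispersed set $I$ of vertices of $G$ with $|f|=w[I]$; then $\gamma_w\le|f|=w[I]\le\rho_w\le\gamma_w$, forcing equality throughout (and hence also $\gamma^i_w=\gamma_w$). Recall that the vertices of $G$ are the chosen edges of $T$, two of them adjacent iff they share an endpoint, so each vertex of $G$ lies in the two cliques $C_a,C_b$ consisting of all chosen edges through its endpoints $a,b$. This is the exact tree analogue of an interval having a left and a right endpoint, with the one difference that the lower side of an edge may branch into several subtrees.

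First I would root $T$ at an arbitrary vertex, orienting every edge towards its deeper endpoint and assigning to each vertex of $G$ the depth of that endpoint. A linear extension of the resulting partial order listing the deepest edges first plays the role of the ordering of intervals by right endpoint. I then define $f$ by the analogous greedy sweep: repeatedly take the deepest edge $v$ with positive residual weight, place all of its residual weight on the neighbour $u$ of $v$ reaching closest to the root (the parent edge at the shallow endpoint of $v$ when that edge is present, and $v$ itself otherwise), and subtract this amount from the residual weights of all vertices of $N(u)$. As in the interval case one checks that the procedure terminates with a $w$-dominating $f$, that every unit placed is forced by an equality $f[N(v)]=w(v)$ at some witness edge, and that $f$ vanishes on the tree analogue of non-maximal intervals, namely pendant edges that are not alone in their upper clique $C_u$ (for such an edge $e$ one has $N(e)=C_u\subseteq N(p)$ for the parent or a sibling edge $p$).

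The core technical step is the tree analogue of Assertion \ref{min}: for every $w$-dominating $h$ and every prefix $U$ of this order — equivalently, every down-set, i.e.\ a union of the subtrees hanging below an antichain of $T$ — one has $f[U]\le h[U]$. As before this is proved by induction along the order: for the last edge of $U$ carrying positive $f$-value one locates a witness $v_t$ that was dominated through it and a window $A\subseteq U$ with $N(v_t)\subseteq A$, the extra edges of $A$ being pendant and carrying no $f$-weight, whence $f[A]=f[N(v_t)]=w(v_t)\le h[N(v_t)]\le h[A]$. The one genuinely new point is that on a tree $N(v_t)$ need not be an interval of the order; forcing it to occupy a contiguous window $A$ requires refining the depth order into a depth-first order, and this refinement must be made consistently for the extraction step below.

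The delicate part, and the main obstacle, is the construction of the dispersed set $I$. In the interval proof this relied on a \emph{second} function $g$, obtained by running the very same algorithm from the opposite end, together with the identity $\sum f=\sum g$ on prefixes used to cut the line into blocks, each hosting one dispersed vertex. On a tree there is no single opposite direction in which to run the reverse sweep, precisely because the lower side of an edge branches; this is where the clean left/right symmetry of intervals is genuinely lost. I would address this by working branch by branch from the root: running the root-to-leaf greedy $g$ inside each rooted subtree and, using the two prefix bounds, partitioning $V(G)$ into blocks $A_1,\dots,A_p$, each either carrying a single witness $v_{A_j}$ with $f[A_j]=w(v_{A_j})$ or satisfying $f[A_j]=0$. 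Because consecutive witnesses are separated by a whole cluster of $T$ they lie at distance at least $3$ in $G$, so $I=\{v_{A_j}\}$ is dispersed and $|f|=\sum_j w(v_{A_j})=w[I]$, completing the proof. Verifying dispersedness and the block identities across the branch points of $T$ is where the real work lies.
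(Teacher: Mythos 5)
Your first half is essentially the paper's: rooting $T$, processing edges bottom-up, and placing each edge's residual demand on its parent edge is exactly the greedy $f$ the paper defines by induction on height (with a final correction at the root edges, which your sketch glosses over but which is routine). The problem is the second half. You correctly identify the construction of the dispersed set as the crux, you correctly observe that the interval argument's reverse sweep has no analogue because the lower side of an edge branches, and then you do not resolve this: ``running the root-to-leaf greedy $g$ inside each rooted subtree'' is not a defined procedure (to dominate an edge $e=(u,v)$ from below you must choose one of several incomparable children, and no single choice covers the other children's subtrees), and you explicitly defer ``verifying dispersedness and the block identities across the branch points'' --- which is precisely the content of the theorem that is missing. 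As written, the proposal proves $\gamma_w\le |f|$ but not $|f|\le\rho_w$, so it is a genuine gap, not a complete proof.

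It is worth noting that the paper needs neither the reverse greedy nor the analogue of Assertion~\ref{min} that you propose to develop. Instead it exploits the \emph{local tightness} of the bottom-up greedy: whenever $g(e)>0$ for an edge $e=(r,u)$ at the current root, the child $e'=(u,v)$ realizing the maximum in the definition of $f(e)$ satisfies $w(e')=g[N(e')]$. One such tight witness is chosen per positive root edge; these witnesses are pairwise at distance $3$ since they lie in distinct branches at depth $1$. Deleting the witnesses' closed neighborhoods together with the zero-weight root edges removes a set $E_1$ with $g[E_1]=w[I_1]$ (the neighborhoods are disjoint and everything else deleted has $g$-value $0$), and leaves a forest whose roots are far enough down that recursing preserves dispersedness. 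Iterating partitions $E(T)$ into sets $E_k$ with $g[E_k]=w[I_k]$, giving $|g|=w[I]$ directly. If you want to salvage your outline, this root-down peeling by tight witnesses is the idea to substitute for the reverse sweep; the prefix-minimality lemma can be dropped altogether.
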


\begin{proof}

Let $T$ be a tree and let $F$ be a subset of its edge set. We may clearly assume that $F=E(T)$. Let $G$ be
the line graph of $F$, and let $w$ be
any system of integral weights on  $F$ (namely on the vertices of $G$).
We shall construct  a $w$-dominating function $g:V\to\mathbb{N}$ and a  dispersed set of edges $I$ with $w[I]=|g|$.

Choose a root $r$ for $T$, and direct the edges away from it. For  $v \in V(T)$ let $A(v)$ be the set of edges of the form $(v,x)$. For $e=(u,v)\in F$ let
the height, $height(e)$, be
the length (number of edges) of the longest  path from $v$ to a
leaf (so, if $v$ is a leaf, $height(e)=0$).  The depth, $depth(e)$, is the length of path from $r$ to $u$ (so, $depth(e)=0$ if and only if $e \in A(r)$).

We
define  a  $w$-dominating  function $f$ on $F$  by
induction on the height.
Let
 $f(e)=0$ for all edges $e$ of height $0$.
Assume that  $f$ has been defined on all  edges of height smaller than $k$.   For $e=(u,v)$ with  $height(e)=k$, let
$$f(e)=\max_{e'=(v,x')\in A(v)}\left( w(e')-f[A(v)]-f[A(x')]\right)^+.$$

(So, $e$ takes care of dominating its sons.) We
continue until $f$ is defined on all of $F$. By its
definition, $f$ $w$-dominates $F\setminus A(r)$. If $f$ $w$-dominates the entire $F$, then let $g=f$. Otherwise, let
$$d=\max w(e)-f[A(r)]-f[A(x)]$$

where the maximum is taken over all $e=(r,x)\in A(r)$, and let $e_0=(r,x)$  be an edge attaining this maximum.

Define $g(e_0)=f(e_0)+d$ and $g(e)=f(e)$ for
every $e\neq e_0$. Clearly, $g$ is $w$-dominating.

Next we construct $I$, the desired dispersed
set  with weight $|g|$. We
do so inductively, using the definition of $g$ above. For the first step of the induction, we distinguish two cases:

I.   $d>0$.~~
Let then $I_1=\{e_0\}$ (where $e_0$ is as defined above).

II. $d=0$. Then, for every edge $e=(r,u)\in A(r)$ satisfying
$g(e)\neq 0$ let $Z(e)$ be the set of edges
$e'=(u,v)$   satisfying  $w(e')=g[N(e')]$.
By the definition of $g$, the set $Z(e)$ is non empty. Choose an edge $e' \in Z(e)$ of maximal weight. Let $I_1$ be the set of edges $e'$ thus chosen. By its definition,   $I_1$ is dispersed.

In both cases, delete all edges in $I_1$ together with all edges intersecting them. Delete also all edges $e \in A(r)$ with
$g(e)=0$. Let $E_1$  be the set of all the deleted edges.

 Note that since  every edge $e'\in I_1$ satisfies $w(e')=g[N(e')]$,  we have  $g[E_1]=w[I_1]$.

We now apply the same procedure as above to each tree $S$ in the resulting forest  $F_1$.  Let $v_S$ be the root of $S$.  For every edge $e$  containing  $v_S$ and satisfying $g(e)\neq 0$ there exists  an edge $e'$ of depth $1$, that intersects $e$ and such that $w(e')=g[N(e')]$.
Among the edges satisfying this condition, choose the edge with maximal $g$-value. Let  $I_2$ be the set of all  edges chosen
in all trees of  $F_1$. Since $I_2$ contains only edges of depth $1$ of the new trees the set $I_1\cup I_2$  is dispersed.

We next delete all edges of  $I_2$, all their neighbors, as well as all the edges $e\in A(v_S)$ for every $S\in F_1$ that satisfy $g(e)=0$. Let $E_2$  be the set of all the deleted edges in $F_1$.

Since $w(e')=g[N(e')]$ for every $e'\in I_2$, we have $g[E_2]=w[I_2]$.

Continuing this way until all the edges are deleted, we obtain sets $I_1, I_2\ldots ,I_m$ of edges and a partition of the set of edges $E(T)$ into set $E_1,E_2,\dots,E_m$ such that $g[E_k]=w[I_k]$  for every $k=1,2,\dots,m$. Hence,  the set $I=I_1\cup I_2, \ldots, \cup I_{m}$ is dispersed, and  satisfies:
$$|g|=\sum_{k=1}^m g[E_k]=\sum_{k=1}^m w[I_k]=w[I]$$
as desired.

\end{proof}

\section{Split graphs}
Finally, we observe that
the equality $\gamma_w=\gamma^i_w$ holds in  another well-known  class of chordal graphs - split graphs.   Example \ref{ex-split} shows that the stronger equality, $\rho_w=\gamma_w$, is not necessarily true in such graphs.

\begin{definition}\label{def:split}
A graph $G=(V,E)$ is a {\em split graph} if its vertex set is the
disjoint union of  a clique and an independent set.
\end{definition}

\begin{theorem}
In a  split graph  $\gamma_w=\gamma^i_w$ for any integral weight function
$w$.
\end{theorem}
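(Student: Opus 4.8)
The plan is to prove the two inequalities separately. The inequality $\gamma^i_w\le\gamma_w$ is immediate, since any function $w$-dominating all of $V$ in particular $w$-dominates every independent set $I$, so the minimal size of a function $w$-dominating $I$ is at most $\gamma_w$ for each $I$, and taking the maximum over $I$ preserves this. The work is therefore to exhibit a single independent set $I$ whose minimal integral $w$-domination cost already equals $\gamma_w$. Write $V=C\cup S$ with $C$ a clique and $S$ independent. I would first dispose of isolated vertices: an isolated vertex $v$ satisfies $N(v)=\{v\}$, so it contributes exactly $w(v)$ to the size of every dominating function and may be added to any independent set; hence both $\gamma_w$ and $\gamma^i_w$ split off an additive term $\sum_{v\text{ isolated}}w(v)$, and I may assume $G$ has no isolated vertex. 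In particular every vertex of $S$ then has a neighbour in $C$.

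The heart of the argument is the formula
\[
\gamma_w=\max\bigl(M,\,W\bigr),
\]
where $M$ is the minimal size of an integral function $w$-dominating $S$ and $W=\max_{c\in C}w(c)$. The inequality $\gamma_w\ge\max(M,W)$ is clear, since a $w$-dominating function dominates $S$ (giving $\ge M$) and, being non-negative, satisfies $|f|\ge f[N(c)]\ge w(c)$ for every $c$ (giving $\ge W$). For the reverse inequality I would begin with an integral $h$ that $w$-dominates $S$ with $|h|=M$ and relocate all of its weight onto $C$: moving each positive value $h(s)$ onto an arbitrarily chosen neighbour of $s$ in $C$ leaves $h[N(s)]$ unchanged for that particular $s$ and can only increase it for the other vertices of $S$ (two vertices of $S$ are never adjacent). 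The resulting clique-supported function $h'$ thus still $w$-dominates $S$ and has $h'[C]=M$.

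With $h'$ in hand the conclusion is short. If $M\ge W$, then $h'$ already $w$-dominates $C$ as well, because $h'[N(c)]=h'[C]=M\ge W\ge w(c)$ for every $c$; so $h'$ is $w$-dominating and $\gamma_w\le M$. If $M<W$, adding $W-M$ units of weight anywhere on $C$ (which, being non-negative, cannot destroy the domination of $S$) yields a $w$-dominating function of size $W$. In either case $\gamma_w\le\max(M,W)$, so equality holds. The witness is now forced: when $M\ge W$ take $I=S$, whose minimal integral domination cost is exactly $M=\gamma_w$; when $W>M$ take $I=\{c^{*}\}$ for a clique vertex with $w(c^{*})=W$, a singleton with domination cost $W=\gamma_w$. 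Restoring the isolated vertices to $I$, whose closed neighbourhoods are disjoint from everything, adds $\sum w(v)$ to both the domination cost of $I$ and to $\gamma_w$, and yields $\gamma^i_w\ge\gamma_w$.

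I expect the main subtlety to lie in two places. First, the quantity that must be matched to $\gamma_w$ is the \emph{integral} $S$-domination number $M$, not its fractional relaxation: the fractional value can be strictly smaller — for instance when the bipartite incidence between $S$ and $C$ encodes an odd-cycle covering pattern — so a packing/LP-duality certificate of the kind available for interval graphs is genuinely insufficient here. This is precisely why $\rho_w<\gamma_w$ can occur in split graphs (as in Example \ref{ex-split}) while $\gamma^i_w=\gamma_w$ nevertheless holds. Second, the relocation step and the additive treatment of isolated vertices must be carried out carefully, since an isolated vertex (or a clique vertex with no neighbour in $S$) cannot be served by weight placed elsewhere and is exactly what would break the clean identity $\gamma_w=\max(M,W)$ if such vertices were not removed at the outset.
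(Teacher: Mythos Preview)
Your argument is correct and follows essentially the same route as the paper: reduce to the case of no isolated vertices, take a minimum integral function $w$-dominating the independent side $S$ supported on the clique $C$ (your relocation step is exactly the paper's ``we can assume $g(b)=0$''), and then split into the two cases $M\ge W$ and $M<W$, augmenting on the clique in the second case. Your packaging via the identity $\gamma_w=\max(M,W)$ is slightly more explicit than the paper's, but the content is the same.
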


\begin{proof}
Let  $V(G)=A\cup B$ where  $A$ is a
clique and $B$ is an independent set and let $w:V\to\mathbb{N}$ be
a weight function. We have to prove that $\gamma^i_w\geq \gamma_w$.

Let $g:V\to\mathbb{N}$ be a $w$-minimal function dominating  $B$.  Assuming (as we clearly may) that there are no isolated vertices, we can assume  that $g(b)=0$ for every $b\in B$. Since $B$ is independent,  $\gamma^i_w\geq |g|$. If $|g|\geq \max_{a\in A}w(a) $, then $g$ is dominating. Hence $|g|\geq\gamma_w$, proving the desired inequality.

Suppose next that $|g| < \max_{a\in A}w(a) $. Let $a\in A$ be a vertex satisfying   $w(a) =\max\{w(a'):a'\in A\}$. Clearly, $w(a)>|g|$. Define $f$ as follows: $f(v)=g(v)$ for every $v\neq a$ and $f(a)=g(a)+w(a)-|g|$. The function $f$ is dominating and satisfies $|f|=w(a)$, so $\gamma_w\leq w(a)$.
Since clearly $\gamma^i_w \ge w(a)$, we have the desired inequality.
\end{proof}

\end{document}